\newcommand\bdf{\begin{definition}}
\newcommand\bpr{\begin{proposition}}
\newcommand\brk{\begin{remark}}
\newcommand\blm{\begin{lemma}}
\newcommand\bexe{\begin{exercise}}
\newcommand\bexa{\begin{example}}
\newcommand\beqn{\begin{eqnarray*}}
\newcommand\edf{\end{definition}}
\newcommand\epr{\end{proposition}}
\newcommand\erk{\end{remark}}
\newcommand\elm{\end{lemma}}
\newcommand\eexe{\end{exercise}}
\newcommand\eexa{\end{example}}
\newcommand\eeqn{\end{eqnarray*}}
\newcommand{\opt}{\rm Opt}
\newcommand{\geo}{\rm Geo}
\newcommand{\lmt}[2]{\mathop{\lim}_{{#1} \rightarrow {#2}} }
\newcommand{\lip}[1]{{\mathrm{lip}}({#1})}
\newcommand{\lmti}[2]{\mathop{\underline{\lim}}_{{#1} \rightarrow {#2}} }
\newcommand{\mm}{\mathfrak m}
\newcommand{\ms}{(X,\d,\mm)}
\newcommand{\cdkn}{{\rm CD}(0, \infty)}
\newcommand{\rcdkn}{{\rm RCD}(0, N)}
\newcommand{\rcd}{{\rm RCD}(0, \infty)}
\newcommand{\ent}[1]{{\rm Ent}_{#1}}
\newcommand{\N}{\mathbb{N}}
\newcommand{\qq}{\mathfrak{q}}
\newcommand{\R}{\mathbb{R}}
\newcommand{\diam}{\mathop{\rm diam}\nolimits} 
\newcommand{\supp}{\mathop{\rm supp}\nolimits}   
\newcommand{\Lip}{\mathop{\rm Lip}\nolimits}
\renewcommand{\d}{{\mathrm d}}
\newcommand{\D}{{\mathrm D}}
\newcommand{\restr}[1]{\lower3pt\hbox{$\llcorner_{#1}$}}
\newcommand{\nchi}{{\raise.3ex\hbox{$\chi$}}}
\begin{document}

\ensubject{fdsfd}

\ArticleType{ARTICLES}
\Year{2026}
\Month{January}%
\Vol{69}
\No{1}
\BeginPage{1} %
\DOI{10.1007/s11425-023-xxxx-x}
\ReceiveDate{January 1, 2026}
\AcceptDate{January 1, 2026}
\OnlineDate{January 1, 2026}

\title{Sharp and rigid isoperimetric inequalities in metric measure spaces with non-negative Ricci curvature}
{Sharp and rigid isoperimetric inequalities in metric measure spaces with non-negative Ricci curvature}

\author[1, $\ast$]{Bang-Xian Han}{{hanbx@sdu.edu.cn}}

\AuthorMark{Bang-Xian Han}

\AuthorCitation{Bang-Xian Han}

\address[1]{School of   Mathematics, Shandong University,\\
Jinan, {\rm 250100}, China}

\abstract{Using optimal transport, we prove a sharp, dimension-free isoperimetric inequality involving volume entropy for metric measure spaces with non-negative Ricci curvature in the sense of Lott--Sturm--Villani. 
Furthermore, we prove rigidity for ${\rm RCD}(0, \infty)$ spaces via Bakry--\'Emery's $\Gamma_2$-calculus. These results are new even for Euclidean spaces equipped with log-concave densities, and are of interest in both probability theory and geometry.}

\keywords{isoperimetric inequality, Cheeger constant, curvature-dimension condition,  metric measure space, non-negative Ricci curvature, optimal transport,  volume entropy}

\MSC{53C23,  49Q20,  53C21,  53C24}

\maketitle

\section{Introduction}

The aim  of this  paper is to present a \emph{sharp dimension-free  isoperimetric inequality}, in metric measure spaces with non-negative Ricci curvature in the sense of Lott\textendash Sturm\textendash Villani, and study its rigidity.

Let $\ms$ be a metric measure space, where  $(X, \d)$ is a complete and separable metric space and $\mm$ is  a locally finite, non-negative Radon measure with full support.  The \emph{Minkowski content} of a  Borel
set $\Omega \subset X$ with $\mm(\Omega)<+\infty$ is defined by
\[
\mm^+(\Omega):=\mathop{\liminf}_{\epsilon \to 0^+} \frac{\mm(\Omega^\epsilon)-\mm(\Omega)}{\epsilon}
\]
where $\Omega^\epsilon \subset X$ is the $\epsilon$-neighbourhood of $\Omega$ defined as $\Omega^\epsilon:=\{x: \d(x, \Omega)<\epsilon\}$.  An isoperimetric inequality   relates  the size of the boundary of a set to its measure. Let  $\mathcal M$ be a family of metric measure spaces. We say that a function $I_{\mathcal M}(\cdot): [0, +\infty) \to  [0, +\infty)$ is \emph{the isoperimetric profile} associated with $\mathcal M$,  if
\[
\mm^+(\Omega) \geq I_{\mathcal M}(v)
\]
for all $\ms \in \mathcal M$ and any measurable set $\Omega \subset X$ with $\mm(\Omega)=v$.

Recently, isoperimetric inequalities in  non-compact metric measure spaces with  non-negative  synthetic Ricci curvature,  have been studied in various settings, for example by Agostiniani\textendash Fogagnolo\textendash Mazzieri\cite{AFM-S}, Brendle\cite{BrendleSobolev}, Balogh and  Krist\'aly\cite{Balogh:2022aa},  Antonelli\textendash Pasqualetto\textendash Pozzetta\textendash Semola\cite{antonelli2022asymptotic, antonelli2022sharp},   Cavalletti and Manini\cite{cavalletti2021isoperimetric, cavalletti2022rigidity}.  As discovered by E. Milman\cite{Milman-JEMS}, the isoperimetric profile  for this family of spaces is trivial if there is no restriction on the diameter of the sets.
In the  above-mentioned papers, a key component  in the isoperimetric profile is  a  parameter called  \emph{the asymptotic volume ratio}.

However,  the asymptotic volume ratio  depends on the dimension parameter, so those  isoperimetric inequalities   are all \emph{dimension-dependent}. So it is natural to ask for a \emph{dimension-free} isoperimetric inequality in metric measure spaces with  non-negative Ricci curvature,   in the sense of Lott\textendash Sturm\textendash Villani\cite{Lott-Villani09, S-O1}.  Examples satisfying this condition include weighted Riemannian manifolds with non-negative Bakry\textendash \'Emery curvature, Banach spaces,  measured Gromov--Hausdorff limits of Riemannian manifolds with non-negative Ricci curvature,    Alexandrov spaces with  non-negative curvature,  and Finsler manifolds with non-negative weighted Ricci curvature. See  Ambrosio's  ICM Proceeding\cite{AmbrosioICM} and Villani's book\cite{V-O}  for  an overview of this field.

\begin{definition}[Lott\textendash Sturm\textendash Villani\cite{Lott-Villani09, S-O1}]\label{def:cd}
We say that a metric measure space $\ms$  has non-negative Ricci curvature, or satisfies ${\rm CD}(0, \infty)$ condition,  if  the relative entropy    $\ent{\mm}$ defined as
\begin{equation*}
{\rm Ent}_\mm(\mu):=
\left \{\begin{array}{ll}
\int \ln \rho\,\d \mu &\text{if}~ \mu=\rho\,\mm\\
+\infty &\text{otherwise}
\end{array}\right.
\end{equation*}  is displacement convex. This is to say,  for any two probability measures $\mu_0, \mu_1$ in the $L^2$-Wasserstein space $(\mathcal{P}_2(X), W_2)$, there  is  a geodesic ${(\mu_t)}_{t\in [0,1]}$ satisfying
 \begin{equation*}
{\rm Ent}_\mm(\mu_t) \leq t{\rm Ent}_\mm(\mu_1)+(1-t){\rm Ent}_\mm(\mu_0)~~\forall t\in [0,1].
\end{equation*}
\end{definition}

\begin{remark}\label{spp}
Let $(X,\d)$ be a  geodesic space and fix  $\mu_{0}, \mu_{1} \in \mathcal{P}_2(X)$. A curve $(\mu_t)_{t \in [0,1]} \subset (\mathcal{P}_2(X), W_2)$
is a  geodesic, i.e.,
\begin{equation*}
{W}_2(\mu_s,\mu_t)=|s-t| W_2(\mu_0,\mu_1), \quad \forall s,t \in [0,1],
\end{equation*}
if and only if there exists  $\Pi \in  \mathcal{P}(\geo(X)) \subseteq \mathcal{P}(C([0,1],X)) $, called the optimal dynamical plan, such that 
\[
\mu_t = ({{\rm e}_t})_\sharp \Pi \; \; \forall t \in [0,1]  \quad \text{and}\quad  ({{\rm e}_0}, {{\rm e}_1})_\sharp \Pi \in \opt(\mu_0, \mu_1),
\]
where  $\opt(\mu_0, \mu_1)$ denotes the set of optimal transport plans from $\mu_{0}$ to $\mu_{1}$.
\end{remark}

In order to evaluate the growth of the volume without the dimension parameter, we  will use  \emph{volume entropy}. This is an important concept in both Riemannian  geometry (cf.\cite{BessonEntropy}) and  dynamical systems (cf.\cite{ManningEntropy}). For example, it is related to Gromov's simplicial volume, the bottom of the spectrum of  the Laplacian, the Cheeger isoperimetric constant, the growth of fundamental groups, and topological entropy of geodesic flows.
\begin{definition}[Volume entropy]\label{def:ve}
We say that a metric measure space $\ms$ has {volume entropy} $h_{\ms}$,  if there is $x_0\in X$ so that the following limit exists
\[
h_{\ms}:=\lmt{r}{+\infty} \frac {\ln \mm\big (B_{r}(x_0)\big)} {r}\in [0,\infty].
\]

\end{definition}

\begin{remark}
While the existence of volume entropy is well-known for the universal cover of a compact Riemannian manifold
\cite{BessonEntropy,ManningEntropy}, to the best of our knowledge, this is the first result 
establishing the existence of volume entropy for general non-compact 
$\mathrm{CD}(0,\infty)$ metric measure spaces without assuming a finite 
upper dimension bound. See Proposition \ref{prop:entropy} below 
for the proof.
\end{remark}

\medskip

The first main result of this paper is the following sharp  isoperimetric inequality involving volume entropy. 

\begin{theorem}[Sharp isoperimetric inequality, Theorem \ref{th1} and Theorem \ref{th2}]\label{th-1}
Let $\ms$ be a metric measure space satisfying the $\cdkn$ condition.  Then for any $\Omega \subset X$ with $\mm(\Omega)<\infty$, we have 
\begin{equation}\label{intro:eq1}
\mm^+(\Omega) \geq h_{\ms} \mm(\Omega).
\end{equation}
In other words, the {Cheeger constant}  $\mu_{\ms}:=\inf_{\Omega} \frac{ \mm^+(\Omega) }{\mm(\Omega)}$ is 
no less than the volume entropy $h_{\ms}$.
Moreover,  the constant $h_{\ms}$ in \eqref{intro:eq1} cannot be improved. 
\end{theorem}

\medskip

In \cite[Theorem 1]{Brooks1981}, R. Brooks proved that the bottom of the essential spectrum $\lambda_0^{\rm ess}$ is bounded from above by $\frac 14 h_{\ms}^2$ if $\mm(X)=+\infty$. 
Combining this with Cheeger's inequality \cite{Cheeger1970} we get the following inequality (cf. \cite[Corollary 2]{Brooks1981})
\[
\frac 14 h_{\ms}^2 \geq \lambda_0^{\rm ess} \geq \frac 14 \mu_{\ms}^2.
\]
Then  we obtain the following corollary.

\begin{corollary}
Let $\ms$ be a   $\cdkn$ metric measure space with  infinite volume. The following equality holds:
\[
\frac 14 h_{\ms}^2 = \lambda_0^{\rm ess} = \frac 14 \mu_{\ms}^2.
\]
\end{corollary}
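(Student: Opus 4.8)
The plan is to simply chain the sharp isoperimetric identity of Theorem \ref{th-1} with the two spectral estimates recalled just above. Since $\mm(X)=+\infty$, Theorem \ref{th-1} identifies the Cheeger constant with the volume entropy, $\mu_{\ms}=h_{\ms}$, so in particular $\tfrac14 h_{\ms}^2=\tfrac14\mu_{\ms}^2$. The only remaining inputs are Brooks' upper bound $\lambda_0^{\rm ess}\le \tfrac14 h_{\ms}^2$ from \cite[Theorem 1]{Brooks1981} (which uses precisely the hypothesis $\mm(X)=+\infty$) and the lower Cheeger-type bound $\lambda_0^{\rm ess}\ge \tfrac14\mu_{\ms}^2$ from \cite{Cheeger1970}, in the form recorded in \cite[Corollary 2]{Brooks1981}.

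Putting these together gives the sandwich
\[
\frac14 h_{\ms}^2 \ \ge\ \lambda_0^{\rm ess}\ \ge\ \frac14 \mu_{\ms}^2 \ =\ \frac14 h_{\ms}^2,
\]
forcing all inequalities to be equalities, which is the asserted identity. There is essentially no obstacle at the level of the corollary: the entire content has been moved into Theorem \ref{th-1}, and the corollary is a two-line deduction. The one point deserving a word of care is that the quantities in play are the bottoms of the \emph{essential} spectrum, so that the version of Cheeger's inequality one must invoke is the one obtained by testing the Rayleigh quotient with functions supported in the complement of arbitrarily large balls (so as to see $\lambda_0^{\rm ess}$ rather than $\lambda_0$); this is exactly the argument behind \cite[Corollary 2]{Brooks1981}. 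Finally, one should keep in mind that $\lambda_0^{\rm ess}$ refers to the linear functional-analytic setting (a strongly local Dirichlet form / self-adjoint Laplacian), available e.g. on infinitesimally Hilbertian spaces; for genuinely Finsler ${\rm CD}(0,\infty)$ spaces the statement is to be read with the corresponding nonlinear spectral gap, the isoperimetric half of the argument (Theorem \ref{th-1}) being unaffected.
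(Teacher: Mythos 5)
Your proposal is correct and follows exactly the paper's own route: the corollary is obtained by combining the identity $\mu_{\ms}=h_{\ms}$ from Theorem \ref{th-1} (valid since $\mm(X)=+\infty$) with Brooks' bound $\lambda_0^{\rm ess}\le \tfrac14 h_{\ms}^2$ and Cheeger's inequality $\lambda_0^{\rm ess}\ge\tfrac14\mu_{\ms}^2$, which forces equality throughout.
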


\medskip

It has  been observed by De Ponti\textendash Mondino\textendash Semola  \cite{DPMS2021} (see also \cite{DPM2021}) that  the equality in Cheeger's isoperimetric inequality can never be attained in the family of 
 spaces with finite diameter or positive Ricci curvature (Theorem  \ref{th-1} provides another interpretation of this fact).  In the next theorem we show that, in  metric measure spaces satisfying the Riemannian curvature-dimension condition $\rcd$,  the  isoperimetric inequality is rigid.   Here  ``Riemannian" means that $(X,\d,\mm)$ is  infinitesimally Hilbertian (cf. \cite{AGS-M, G-O}).  In this case,  we can use non-smooth Bakry--\'Emery theory and $\Gamma_2$-calculus to prove the splitting theorem.

\begin{theorem}[Rigidity theorem, Theorem \ref{th:rigid}]\label{th-2}
Let $\ms$ be an $\rcd$  metric measure space having positive volume entropy $h_{\ms}$.

 If there is a measurable set $\Omega \subset X$ with $\mm(\Omega)<\infty$ such that the equality in the isoperimetric inequality \eqref{intro:eq1} is attained 
\[
{\mm^+(\Omega) } =h_{\ms}  {\mm(\Omega)}
\]
or the Cheeger constant is achieved
\[
\mu_{\ms}=\frac{ \mm^+(\Omega) }{\mm(\Omega)},
\]
then 
\[\ms \cong \Big (\R, | \cdot |, {\mathrm e}^{h_{\ms} t} \d t \Big) \times (Y, \d_Y, \mm_Y)\] for some ${\rm RCD}(0, \infty)$  metric measure  space $(Y, \d_Y, \mm_Y) $ with $\mm_Y(Y)<+\infty$.
In  a suitable choice of coordinates, $\Omega$ can be identified as
\[
\Omega={(-\infty, c] \times Y} \subset  \R \times Y
\]
with $c\in \R$ satisfying  $\mm_Y(Y) {\mathrm e}^{h_{\ms} c} =h_{\ms}\mm(\Omega)$.
\end{theorem}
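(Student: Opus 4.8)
The plan is to reduce the equality case to a one-dimensional problem via the needle decomposition, solve that one-dimensional rigidity problem explicitly, and then promote the fiberwise rigidity to a global splitting. First I would apply the localization theorem (Theorem \ref{thm:localization}) to a suitable $1$-Lipschitz guiding function — the natural choice being (an approximation of) the signed distance function to $\partial\Omega$, or more robustly the Kantorovich potential associated with transporting $\mm\restr{\Omega}$ to $\mm\restr{X\setminus\Omega}$ in the proof of the inequality \eqref{intro:eq1}. This disintegrates $\mm$ into one-dimensional conditional measures $\mm_q = h_q\,\mathcal{H}^1\restr{X_q}$ along geodesics $X_q$, each of which, by the standard needle machinery for $\rcd(0,N)$/$\cd(0,\infty)$ spaces, carries a density $h_q$ that is log-concave (here $\log$-concave, since $K=0$). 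On each needle the isoperimetric inequality \eqref{intro:eq1} must already be saturated, because integrating the fiberwise inequalities reproduces the global one and equality holds globally; this is the usual ``equality propagates to a.e.\ fiber'' step.

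Next I would carry out the \textbf{one-dimensional rigidity analysis}: on a single needle parametrized as an interval $(a,b)\subset\R$ (possibly with $a=-\infty$) with log-concave density $h_q$, the relevant fact is that the Cheeger constant / isoperimetric ratio of such a weighted interval is $\limsup$-type quantity governed by the exponential decay rate of $h_q$, and it equals $h_{\ms}$ only when $h_q(t) = c\, e^{h_{\ms} t}$ on a half-line. Concretely, equality in \eqref{intro:eq1} on the needle forces the density to be exactly the log-linear exponential $e^{h_{\ms}t}$ (up to the additive normalization absorbed into the parametrization), the needle to be a full half-line $(-\infty, \beta_q)$, and — by matching the endpoint condition coming from how $\Omega$ intersects the needle — the set $\Omega$ to be a half-line $(-\infty, e_q]$ inside it. Here I expect to use that a log-concave density on a bounded interval, or a log-concave density decaying strictly faster than $e^{h_{\ms}t}$, gives a strictly larger isoperimetric ratio, so saturation rules these out. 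The endpoint value $e_q$ is then pinned by $\mm(\Omega\cap X_q) = \int_{-\infty}^{e_q} e^{h_{\ms}t}\,dt = h_{\ms}^{-1} e^{h_{\ms} e_q}$ combined with the fiberwise Minkowski content $\mm^+(\Omega\cap X_q) = e^{h_{\ms} e_q}$.

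Finally I would \textbf{globalize}. Having shown that $\mm$-a.e.\ needle is isometric to $(\R, |\cdot|, e^{h_{\ms}t}\,dt)$ (as a weighted metric space, via the exponential parametrization) and that the transport moves every point by a uniform ``infinite'' amount along these parallel needles, I invoke the splitting/rigidity results for $\rcd$ spaces: the existence of a line-like structure with the warped product written as a genuine product is precisely the conclusion of the $\rcd$ splitting theorem adapted to the weighted setting. More precisely, the guiding function descends to a function on $X$ whose gradient has constant norm and whose flow gives an isometric $\R$-action compatible with the density $e^{h_{\ms}t}$; standard arguments (à la Gigli's splitting theorem, or the rigidity in Cavalletti--Mondino-type localization) then yield $\ms \cong (\R, |\cdot|, e^{h_{\ms}t}\,dt)\times(Y,\d_Y,\mm_Y)$, with $Y$ inheriting an $\rcd$ structure and $\mm_Y(Y) = \int_Y d\mm_Y < +\infty$ (finiteness of $\mm_Y(Y)$ being forced by the fact that each fiber integrates $e^{h_{\ms}t}$ against the same finite total, matching $\mm$-a.e., and by local finiteness of $\mm$). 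In these product coordinates the fiberwise description $\Omega\cap X_q = (-\infty, e_q]$ must have $e_q \equiv e$ constant (otherwise the global equality in the Minkowski content would fail, since a non-constant cut would create extra boundary), giving $\Omega = (-\infty, e]\times Y$ with $\mm_Y(Y)e^{h_{\ms}e} = h_{\ms}\mm(\Omega)$.

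The \textbf{main obstacle} I anticipate is the globalization step: passing from ``a.e.\ fiber is a log-linear half-line and $\Omega$ cuts it at $e_q$'' to a genuine isometric product splitting $X\cong\R\times Y$ with constant cut $e$. The fiberwise information from localization is only measurable in $q$, so one must upgrade it to a metric splitting — this requires either invoking a suitable rigidity companion to the localization theorem (showing that saturation of a one-dimensional inequality along a.e.\ needle forces the warping function and the needle geometry to be rigid \emph{and} the transport to have a product structure), or running a Gigli-type splitting argument using that the guiding function is an eigenfunction-like object with $|\nabla f|\equiv 1$ and a linear evolution of the density along its flow. Controlling the measurability-to-continuity upgrade, and ensuring $(Y,\d_Y,\mm_Y)$ is itself $\rcd$ with finite mass, is where the real work lies.
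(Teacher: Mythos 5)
Your overall philosophy (localization, one-dimensional analysis, then a Gigli-type splitting) matches the paper's, but the two steps you treat as routine are precisely where the argument breaks, and the machinery you would need to fix them is missing. First, the single global balanced localization you start from does not exist: since $h_{(X,\mathsf d,\mathfrak m)}>0$ forces $\mathfrak m(X)=+\infty$, the zero-mean integrable datum $\chi_\Omega-c\,\chi_{X\setminus\Omega}$ required by the localization theorem has $c=\mathfrak m(\Omega)/\mathfrak m(X\setminus\Omega)=0$, and $\Omega$ itself may be unbounded, so there is no needle decomposition adapted once and for all to the pair $(\Omega,X\setminus\Omega)$ (and the signed distance to $\partial\Omega$ does not help, see next point). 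Second, and more seriously, your ``equality propagates to a.e.\ fiber'' step presupposes that every needle satisfies the fiberwise inequality $\mathfrak m_q^+(\Omega\cap X_q)\ge h\,\mathfrak m_q(\Omega\cap X_q)$ with the \emph{sharp} constant $h$. An individual needle is merely a geodesic carrying a log-concave density, and nothing forces its one-dimensional Cheeger ratio to be at least $h$ (a short needle with nearly constant density violates it); so neither the propagation of equality nor the exact one-dimensional rigidity (``density exactly $e^{ht}$ on a full half-line'') can be invoked on single needles. The paper's actual route is to localize, for every large $R$, the truncated balanced functions $g_R=\chi_{\Omega\cap B_R}-\frac{\mathfrak m(\Omega\cap B_R)}{\mathfrak m(B_R\setminus\Omega)}\chi_{B_R\setminus\Omega}$, observe that the needles through $B_{R_0/2}\cap\Omega$ are long and carry an exponentially small fraction $v\approx e^{-hR}$ of their mass in $\Omega$, and then use \emph{quantitative} one-dimensional lemmas (Lemma \ref{lemma:1dim} for an almost sharp fiberwise inequality with constant $\frac{-\ln 2+hR}{D}$, Lemma \ref{lemma:linear} for almost log-linear densities outside a small ``bad'' set of needles), before passing to the limit $R_0,R\to\infty$. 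This quantitative-plus-limiting scheme is the core technical content of the proof and is absent from your proposal.

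For the globalization, which you rightly flag as the main obstacle, you do not supply the construction of the guiding function: in the paper it is the locally uniform limit $\phi$ of the Kantorovich potentials $\phi_R$ of the truncated $L^1$-transports, and one must then prove ${\bf\Delta}\phi=h\,\mathfrak m$ (integration by parts along the good needles using the quantitative density estimates, plus Lemma \ref{lemma:nbhd} to pass from $\Omega$ to all of $X$), $|\mathrm D\phi|=1$ (Lemma \ref{lemma:conv}), and $\mathrm{Hess}\,\phi=0$ via the Bakry--\'Emery inequality, before the regular Lagrangian flow yields the splitting. Finally, your reason that the cut $e(y)$ is constant (``a non-constant cut would create extra boundary'') is only a heuristic; the paper needs a genuine argument through the relaxed perimeter, tensorization of weak gradients on the product, and the Sobolev-to-Lipschitz property to conclude $e(\cdot)\equiv e$. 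As written, your proposal would prove the theorem only in the situation where a single exact localization adapted to $\Omega$ is available (e.g.\ compact or finite-measure settings), which is excluded here.
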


\bigskip

The rest of this paper is organized as follows.  In Section \ref{sect:main} we   prove the sharp isoperimetric inequality and its  corollaries.   In Section \ref{sect:rigidity} we  study  its  rigidity.

\bigskip

\noindent \textbf{Acknowledgement}:  The author thanks  Nicol\`o  De Ponti for bringing  his attention to  the paper  of Robert Brooks,  and thanks Giocchiano Antonelli, Sara Farinelli and Marco Pozzetta for their interest and suggestions on this paper.  The author  also acknowledges the valuable suggestions from the peer reviewers. This  work is supported in part by  the National key R \& D programs of China (2021YFA1000900, 2021YFA1002200),  National Natural Science Foundation of China  (12201596),  Shandong Provincial Natural Science Foundation (ZR2025QB05) and Taishan Scholars Program of Shandong Province (tsqn202408059).

\section{Sharp Isoperimetric Inequalities}\label{sect:main}
In this section we will prove a sharp isoperimetric inequality in $\cdkn$ metric measure spaces. 

\begin{theorem}[Sharp isoperimetric inequality]\label{th1}
Let $\ms$ be a  $\cdkn$ metric measure space having  volume entropy $h_{\ms}$.  Then for any measurable set $\Omega \subset X$ with $\mm(\Omega)<\infty$, the following isoperimetric inequality holds:
\begin{equation}\label{th1:eq1}
\mm^+(\Omega) \geq h_{\ms} \mm(\Omega).
\end{equation}
\end{theorem}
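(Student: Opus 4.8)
The plan is to derive \eqref{th1:eq1} from the multiplicative Brunn--Minkowski inequality encoded in Definition \ref{def:cd}: I transport the normalised restriction of $\mm$ to $\Omega$ onto a large ball $B_\rho(x_0)$ about a fixed point, obtain from displacement convexity of the entropy a lower bound for $\mm(\Omega^\eps)$ in terms of $\mm(\Omega)$ and $\mm(B_\rho(x_0))$, differentiate it at $\eps=0^+$ to produce $\mm^+(\Omega)$, and finally let $\rho\to\infty$, whereupon $\rho^{-1}\ln\mm(B_\rho(x_0))\to h_{\ms}$. Thus the volume entropy appears precisely as the asymptotic slope of $\rho\mapsto\ln\mm(B_\rho(x_0))$, which is the whole point. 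Throughout we may assume $\mm(\Omega)\in(0,\infty)$, $\mm^+(\Omega)<\infty$ and $\mm(\overline\Omega)=\mm(\Omega)$, since otherwise \eqref{th1:eq1} is immediate (if $\mm(\overline\Omega)>\mm(\Omega)$ then $\mm(\Omega^\eps)\ge\mm(\overline\Omega)>\mm(\Omega)$ for every $\eps>0$, so $\mm^+(\Omega)=+\infty$).

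\textbf{The bounded case.} Suppose $\Omega\subseteq B_R(x_0)$. Fix $\rho>0$, put $\mu_0:=\mm(B_\rho(x_0))^{-1}\,\chi_{B_\rho(x_0)}\,\mm$ and $\mu_1:=\mm(\Omega)^{-1}\,\chi_\Omega\,\mm$; these lie in $\mathcal P_2(X)$ and satisfy $\ent{\mm}(\mu_0)=-\ln\mm(B_\rho(x_0))$, $\ent{\mm}(\mu_1)=-\ln\mm(\Omega)$. By Definition \ref{def:cd} there is a $W_2$-geodesic $(\mu_t)_{t\in[0,1]}$ with $\ent{\mm}(\mu_t)\le(1-t)\ent{\mm}(\mu_0)+t\,\ent{\mm}(\mu_1)$; represent it by an optimal dynamical plan $\pi$, i.e.\ a probability measure on the constant-speed geodesics of $(X,\d)$ whose time-$t$ evaluation pushes forward to $\mu_t$. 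For $\pi$-a.e.\ $\gamma$ one has $\gamma_0\in B_\rho(x_0)$ and $\gamma_1\in\Omega$, hence $\d(\gamma_t,\gamma_1)=(1-t)\d(\gamma_0,\gamma_1)<(1-t)(\rho+R)$, so $\mu_t$ is concentrated on the bounded open set $\Omega^{(1-t)(\rho+R)}$. Since $\ent{\mm}(\mu_t)<\infty$ we get $\mu_t\ll\mm$, and Jensen's inequality applied on $\Omega^{(1-t)(\rho+R)}$ gives $\ent{\mm}(\mu_t)\ge-\ln\mm(\Omega^{(1-t)(\rho+R)})$. Combining the three estimates and writing $s:=(1-t)(\rho+R)$,
\[
\ln\mm(\Omega^{s})\ \ge\ \Big(1-\tfrac{s}{\rho+R}\Big)\ln\mm(\Omega)\ +\ \tfrac{s}{\rho+R}\,\ln\mm(B_\rho(x_0)),\qquad 0<s<\rho+R .
\]
Subtracting $\ln\mm(\Omega)$, dividing by $s$ and letting $s\downarrow0$ (using $\mm(\Omega^s)\downarrow\mm(\overline\Omega)=\mm(\Omega)>0$, so $s^{-1}(\ln\mm(\Omega^s)-\ln\mm(\Omega))\to\mm(\Omega)^{-1}\mm^+(\Omega)$) yields $\mm^+(\Omega)/\mm(\Omega)\ge(\rho+R)^{-1}(\ln\mm(B_\rho(x_0))-\ln\mm(\Omega))$ for every $\rho>0$. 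Letting $\rho\to\infty$ the right-hand side tends to $\lim_{\rho\to\infty}\rho^{-1}\ln\mm(B_\rho(x_0))=h_{\ms}$ (balls having finite $\mm$-measure in these spaces), which proves \eqref{th1:eq1} for bounded $\Omega$.

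\textbf{Reduction to bounded sets.} For general $\Omega$ and $R,\eps>0$ one checks the inclusion
\[
(\Omega\cap B_R(x_0))^{\eps}\setminus(\Omega\cap B_R(x_0))\ \subseteq\ (\Omega^{\eps}\setminus\Omega)\ \cup\ \big(\Omega\cap(B_{R+\eps}(x_0)\setminus B_R(x_0))\big).
\]
Writing $F(r):=\mm(\Omega\cap B_r(x_0))$ (non-decreasing, $F\le\mm(\Omega)$) and choosing $\eps_k\downarrow0$ with $\eps_k^{-1}(\mm(\Omega^{\eps_k})-\mm(\Omega))\to\mm^+(\Omega)$, the inclusion gives $\mm^+(\Omega\cap B_R(x_0))\le\mm^+(\Omega)+\liminf_k\eps_k^{-1}(F(R+\eps_k)-F(R))$ for every $R$; meanwhile the bounded case gives $\mm^+(\Omega\cap B_R(x_0))\ge h_{\ms}F(R)$ for a.e.\ $R$. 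Integrating over $R\in[R_0,R_1]$, using Fatou's lemma and the telescoping bound $\int_{R_0}^{R_1}(F(R+\eps_k)-F(R))\,\d R=\int_{R_1}^{R_1+\eps_k}F-\int_{R_0}^{R_0+\eps_k}F\le\eps_k\mm(\Omega)$, then dividing by $R_1-R_0$ and letting $R_1\to\infty$ (so $(R_1-R_0)^{-1}\int_{R_0}^{R_1}F\to\mm(\Omega)$) gives $h_{\ms}\mm(\Omega)\le\mm^+(\Omega)$, completing the proof.

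\textbf{Main difficulty.} Modulo standard facts — the representability of $W_2$-geodesics by optimal dynamical plans, and the finiteness of $\mm$ on bounded sets in ${\rm CD}(0,\infty)$ spaces — the Brunn--Minkowski core is short, and the volume entropy enters in a transparent way. The step that needs genuine care is the reduction to bounded $\Omega$: one must keep the single infinitesimal sequence $\eps_k$ fixed while $R$ varies and integrate in $R$ only afterwards, which is exactly what turns the a priori uncontrolled extra perimeter created by truncating along $\partial B_R(x_0)$ into the harmless telescoping term above. (An alternative to this last step, should the truncation prove awkward, is to localize $\Omega$ via the needle decomposition of Theorem \ref{thm:localization} and run the one-dimensional estimate of Subsection \ref{subsec:1dim} on each needle.)
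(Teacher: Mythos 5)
Your proof is correct, and its first half coincides with the paper's: the bounded case is exactly Step 1 of the paper's argument (transport the normalised restriction of $\mm$ on $\Omega$ to/from a large ball, use displacement convexity of the entropy plus Jensen on the $\eps$-neighbourhood containing the intermediate measures, differentiate the resulting Brunn--Minkowski-type inequality at $\eps=0$, then let the ball radius tend to infinity so that the volume entropy appears as the asymptotic slope of $\ln\mm(B_\rho(x_0))$); your bookkeeping of the degenerate cases $\mm(\overline\Omega)>\mm(\Omega)$ and $\mm^+(\Omega)=+\infty$ matches what the paper does implicitly. Where you genuinely diverge is the passage from bounded sets to arbitrary sets of finite measure. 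The paper invokes the identification of the perimeter as the relaxation of the Minkowski content under $L^1$-convergence of bounded sets (Ambrosio--Di Marino--Gigli), applies the bounded-case estimate to the approximating sets, and concludes via ${\rm Per}(\Omega)\le \mm^+(\Omega)$; as a by-product it obtains ${\rm Per}(\Omega)\ge h_{\ms}\,\mm(\Omega)$, i.e.\ inequality \eqref{eq:per}, which is reused later in the proof of the rigidity theorem. Your reduction instead truncates $\Omega$ by balls and controls the spurious boundary created along $\partial B_R(x_0)$ by fixing a single liminf-realizing sequence $\eps_k$, integrating the inclusion-based estimate in $R$, and observing that the error term telescopes to at most $\eps_k\,\mm(\Omega)$ before averaging and sending $R_1\to\infty$. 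This is more elementary and self-contained (no BV theory in metric measure spaces is needed), at the modest cost of not yielding the perimeter-level inequality \eqref{eq:per} that the paper's later rigidity argument draws on; for the theorem as stated, both routes are complete and correct.
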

\begin{proof}
{\bf Step 1.}
Assume $\Omega$ to be  bounded.

Given $x_0 \in \Omega$, let $R>0$ be such that $\Omega \subset B_R(x_0):=\{x: \d(x, x_0)<R\}$.
Define $\mu_0=\frac 1{\mm(\Omega)} \mm \restr{\Omega}$ and $\mu_1=\frac 1{\mm(B_{R}(x_0))} \mm \restr{B_{R}(x_0)}$.  According to  Definition \ref{def:cd},  there exists an  $L^2$-Wasserstein geodesic  $(\mu_t)_{t\in [0, 1]}$ connecting $\mu_0, \mu_1$ such that
 \begin{equation}\label{th1:eq2}
{\rm Ent}_\mm(\mu_t) \leq t{\rm Ent}_\mm(\mu_1)+(1-t){\rm Ent}_\mm(\mu_0).
\end{equation}
Denote the set of $t$-intermediate points by \[Z_t:=\left \{z: \exists~ x\in \Omega, y\in B_{R}(x_0), \text{such that}~ \frac {\d(z, x)}t=\frac{\d(z, y)}{1-t}=\d(x, y) \right\}.\]
It can be seen from the super-position theorem (cf. \cite[Theorem 2.10]{AG-U} and Remark \ref{spp})  that $ \mu_t $ is concentrated on $Z_t$. Then by \eqref{th1:eq2}, Jensen's inequality  and monotonicity of the function $t \to \ln (t)$  we have
 \begin{equation}\label{th1:eq3}
-\ln \big (\mm(Z_t)\big) \leq  -t \ln \big (\mm(B_{R}(x_0))\big ) - (1-t) \ln \big(\mm(\Omega) \big).
\end{equation}

Let $\epsilon:=t\big (\diam (\Omega)+R\big)$.  For any $z\in Z_t$,  there are $x\in \Omega$ and $y\in B_R(x_0)$ so that $\d(z, x)=t\d(x,y)$.  By the triangle inequality,  $$\d(x, y)\leq \d(x, x_0)+\d(y, x_0)<\diam ( \Omega)+R.$$So  $\d(z,x)<\epsilon$,  $z\in \Omega^\epsilon$ and  $Z_t\subset \Omega^\epsilon$.   

If $\mm^+(\Omega)=+\infty$, there is nothing to prove. Otherwise,
$\lmt{\epsilon}0\mm(\Omega^\epsilon)=\mm(\Omega)$. So we have
\begin{eqnarray*}
\frac{\mm^+(\Omega) }{\mm(\Omega)}&=& \mathop{\liminf}_{\epsilon \to 0}  \frac 1{\mm(\Omega) } \frac{\mm(\Omega^\epsilon)-\mm(\Omega)}{ \epsilon}\\
 &\stackrel{\text {L'H\^opital}}{=}&   \mathop{\liminf}_{\epsilon \to 0} \frac{\ln \big(\mm(\Omega^\epsilon)\big)- \ln \big(\mm(\Omega)\big)}  {\mm(\Omega^\epsilon)-\mm(\Omega)}  \frac {\mm(\Omega^\epsilon)-\mm(\Omega)}{ \epsilon}\\
&\geq &    \mathop{\liminf}_{t \to 0} \frac{\ln\big(\mm(Z_t)\big)-\ln \big(\mm(\Omega) \big)}{t(\diam (\Omega)+R)}\\
 &\stackrel{\eqref{th1:eq3}}{\geq}&  \mathop{\liminf}_{t \to 0} \frac{t \ln \big (\mm(B_{R}(x_0))\big ) +(1-t) \ln \big(\mm(\Omega) \big)-\ln\big(\mm(\Omega)\big)}{t(\diam (\Omega)+R)}\\
&=&   \frac{ \ln \big (\mm(B_{R}(x_0))\big )-\ln\big(  \mm(\Omega)\big)}{\diam (\Omega)+R}.
\end{eqnarray*}
By Proposition \ref{prop:entropy},  the volume entropy $h_{\ms}$ exists.  Letting $R \to \infty$,  we get
\begin{equation}\label{eq:mmh}
\frac{\mm^+(\Omega) }{\mm(\Omega)} \geq h_{\ms}
\end{equation}
which proves the claim.

\bigskip

{\bf Step 2.}
For  $\Omega \subset X$ with $\mm(\Omega)<\infty$, we adopt an  argument used by Cavalletti\textendash Manini  \cite[Theorem 3.2]{cavalletti2021isoperimetric},   based on a  relaxation principle investigated in \cite[Theorem 3.6]{ADMG-P}.  For any  $\Omega \subset X$ with $\mm(\Omega)<\infty$, we have
\begin{eqnarray*}
{\rm Per}(\Omega)
&=& \inf \left\{ \liminf_{n\to \infty} \int \lip{f_n}\,\d \mm: f_n\in \Lip(X,\d),~\lim_{n\to \infty}\int|f_n-\chi_\Omega|\,\d \mm=0 \right\}\\
&=&\inf \left\{ \liminf_{n\to \infty} \mm^+(\Omega_n): \mm(\Omega \Delta \Omega_n) \to 0 \right\}
\end{eqnarray*}
where we   require $\Omega_n$ to be bounded.
Applying \eqref{eq:mmh} with $\Omega_n$ and letting $n\to \infty$ we get
\begin{equation}\label{eq:per}
{\rm Per}(\Omega) \geq h_{\ms} \mm(\Omega).
\end{equation}
By \cite[Theorem 3.6]{ADMG-P},  ${\rm Per}(\Omega) \leq \mm^+(\Omega)$, we complete the proof.
\end{proof}
\bigskip

In the same spirit we can prove the existence of the volume entropy on ${\rm CD}(0, \infty)$ spaces. Note that on Riemannian manifolds having lower Ricci curvature bounds, this result can be proved using  the Bishop--Gromov volume comparison inequality (see \cite{BessonEntropy} and   \cite{ManningEntropy}). 
\begin{proposition}\label{prop:entropy}
Let $\ms$ be a   ${\rm CD}(0, \infty)$ metric measure space. Then  the volume entropy $h_{\ms} \in [0, +\infty]$ exists in the sense of Definition \ref{def:ve}.
\end{proposition}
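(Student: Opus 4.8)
The plan is to convert the claim into a differential inequality for $V(r):=\mm(B_r(x_0))$ and then integrate. Recall that $0<V(r)<\infty$ for every $r>0$ (full support and local finiteness) and that $V$ is non-decreasing. The only place curvature enters is the computation already carried out in Step~1 of the proof of Theorem~\ref{th1}: running that argument between $\mu_0=\mm(\Omega)^{-1}\mm\restr\Omega$ and $\mu_1=\mm(B_R(x_0))^{-1}\mm\restr{B_R(x_0)}$, and stopping \emph{before} the appeal to Proposition~\ref{prop:entropy} (so that there is no circularity), yields: for every bounded Borel $\Omega$ with $\mm^+(\Omega)<\infty$ and every $R$ with $\Omega\subset B_R(x_0)$,
\[
\frac{\mm^+(\Omega)}{\mm(\Omega)}\ \ge\ \frac{\ln\mm(B_R(x_0))-\ln\mm(\Omega)}{\diam(\Omega)+R}.
\]

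Next I would specialise this to $\Omega=B_r(x_0)$ and compare $\mm^+(B_r(x_0))$ with the derivative of $V$. Since $B_r(x_0)^\epsilon\subset B_{r+\epsilon}(x_0)$ for every $\epsilon>0$, one gets $\mm^+(B_r(x_0))\le\liminf_{\epsilon\to0^+}\epsilon^{-1}\big(V(r+\epsilon)-V(r)\big)$, and because a monotone function is differentiable a.e.\ with finite derivative, $\mm^+(B_r(x_0))\le V'(r)<\infty$ for a.e.\ $r>0$. Using also $\diam(B_r(x_0))\le 2r$, the displayed inequality gives, for a.e.\ $r$ and all $R>r$,
\[
\frac{V'(r)}{V(r)}\ \ge\ \frac{\mm^+(B_r(x_0))}{V(r)}\ \ge\ \frac{\ln V(R)-\ln V(r)}{2r+R}.
\]
Let $\bar h:=\limsup_{R\to\infty}\tfrac{\ln V(R)}{R}\in[0,+\infty]$. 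Letting $R\to\infty$ along a sequence realising $\bar h$, the right-hand side tends to $\bar h$ (the fixed additive term $2r$ in the denominator is asymptotically negligible), so $(\ln V)'(r)=V'(r)/V(r)\ge\bar h$ for a.e.\ $r>0$; in particular $\bar h<+\infty$, since $\ln V$ is monotone and therefore has a finite derivative a.e. Finally, monotonicity of $\ln V$ gives $\ln V(R)-\ln V(r_0)\ge\int_{r_0}^R(\ln V)'(t)\,\d t\ge\bar h\,(R-r_0)$ for all $R>r_0>0$, hence $\liminf_{R\to\infty}\tfrac{\ln V(R)}{R}\ge\bar h$; combined with the reverse inequality this shows that $\lim_{r\to\infty}\tfrac{\ln\mm(B_r(x_0))}{r}$ exists in $[0,+\infty)$, i.e.\ $h_{\ms}$ exists (and is in fact finite). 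Its independence of $x_0$ follows from $B_r(x_0)\subset B_{r+\d(x_0,x_1)}(x_1)$, which forces the two limits to coincide.

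I expect the only genuinely delicate point to be the bookkeeping in the middle step: one must make sure that the denominator of the basic inequality grows like $R$ with coefficient exactly $1$ as $R\to\infty$ for fixed $r$, because a spurious factor — say $2R$ instead of $\diam(B_r(x_0))+R$ — would only yield $(\ln V)'\ge\bar h/2$ and would fail to close the argument. Given the form $\diam(\Omega)+R$ produced in Step~1 of Theorem~\ref{th1} together with $\diam(B_r(x_0))\le 2r<\infty$, this is fine, but it is the step that must be checked carefully; everything else reduces to the elementary fact that a non-decreasing function dominates the integral of its a.e.-derivative.
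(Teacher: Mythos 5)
Your proof is correct, but it takes a genuinely different route from the paper's. The paper never passes through the isoperimetric estimate or the derivative of $r\mapsto \mm(B_r(x_0))$: it applies the displacement-convexity inequality \eqref{th1:eq3} directly to the pair of balls $B_\epsilon(x_0)$ and $B_{r+\delta}(x_0)$, chooses $t=\frac{\delta+\epsilon}{r+\delta}$ so that the set of $t$-intermediate points lies inside $B_r(x_0)$, and thereby obtains the approximate monotonicity $\frac{\ln \mm(B_r(x_0))}{r}\ \ge\ \big(1-\tfrac{\epsilon}{r}\big)\,\frac{\ln \mm(B_{r+\delta}(x_0))}{r+\delta}$ (after fixing $\epsilon$ with $\mm(B_\epsilon(x_0))>1$), from which $\liminf\ge\limsup$ follows at once, with no differentiation and no Minkowski content. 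You instead specialise the Step-1 estimate of Theorem~\ref{th1} to $\Omega=B_r(x_0)$ (correctly observing that this estimate is obtained \emph{before} the appeal to Proposition~\ref{prop:entropy}, so there is no circularity), convert it into the a.e.\ differential inequality $(\ln V)'\ge \bar h$ via Lebesgue differentiation of the monotone function $V(r)=\mm(B_r(x_0))$, and integrate; your bookkeeping at the delicate point is right, since $\diam(B_r(x_0))+R\le 2r+R$ has asymptotic slope $1$ in $R$ for fixed $r$, so no factor is lost. The trade-off: the paper's argument is shorter and uses nothing beyond the convexity inequality and elementary algebra, whereas yours additionally delivers the finiteness of $h_{\ms}$ and the independence of the base point, at the cost of importing the whole Minkowski-content manipulation of Step 1 and the differentiation theory of monotone functions. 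One small caveat: $V(r)<\infty$ does not follow from local finiteness of $\mm$ alone (in a non-locally-compact space a ball may have infinite measure); it is a standard consequence of ${\rm CD}(0,\infty)$, tacitly used by the paper as well, and in any case if some ball had infinite measure the limit would exist trivially (being $+\infty$), so the existence statement is unaffected.
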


\begin{proof}
 If $\mm(X)<+\infty$,  we have $h_{\ms}=0$, otherwise there is $\epsilon>0$ such that $\mm(B_\epsilon (x_0))>1$.  
Applying \eqref{th1:eq3} with  $\Omega=B_{r+\delta}(x_0)$  and $R=\epsilon$ for some $r>\epsilon$ and $\delta,>0$,   we get 
 \begin{equation}\label{prop1:eq1}
\ln \big (\mm(Z_t)\big) \geq  t \ln \big (\mm(B_\epsilon (x_0))\big ) +(1-t) \ln \big(\mm(B_{r+\delta}(x_0)) \big)~~\forall t\in [0, 1].
\end{equation}
For any $z\in Z_t$,  by the triangle inequality  
\[
\d(z, x_0) < (1-t)[(r+\delta)+\epsilon]+\epsilon.
\]
So for  $t=\frac {\delta+\epsilon}{r+\delta} $,   we have $Z_t \subset B_{r+\epsilon}(x_0)$. Thus  \eqref{prop1:eq1} implies
\begin{equation}\label{eq:lemma:ve}
\ln \big (\mm(B_{r+\epsilon} (x_0))\big )  \geq  \frac {\delta+\epsilon}{r+\delta}  \ln \big (\mm(B_\epsilon (x_0))\big ) +\frac {r-\epsilon}{r+\delta} \ln \big(\mm(B_{r+\delta}(x_0)) \big).
\end{equation}
Dividing   both sides of \eqref{eq:lemma:ve} by $r$, we get
\[
 \frac{\ln \big (\mm(B_{r+\epsilon} (x_0))\big )}{r} \geq \left (1-\frac \epsilon r\right) \frac{\ln \big (\mm(B_{r+\delta} (x_0))\big )}{r+\delta}~~~~\forall r, \delta>0.
\]
Then
\[
\liminf_{r\to +\infty}  \frac{\ln \big (\mm(B_{r} (x_0))\big )}{r}  \geq \lim_{r\to +\infty}   \left (1-\frac \epsilon r\right) \limsup_{\delta \to +\infty}  \frac{\ln \big (\mm(B_{\delta} (x_0))\big )}{\delta},
\]
which is the thesis.
\end{proof}
Next we will show that the inequality \eqref{th1:eq1} is sharp. This can be proved by  combining    \cite[Corollary 2]{Brooks1981} where  Brooks showed that the  Cheeger constant is no larger than the volume entropy,  and our Theorem \ref{th1}.  We will give a  different  proof  which  has its own interest.

\begin{theorem}[Sharpness]\label{th2}
The inequality \eqref{th1:eq1} in Theorem \ref{th1} is sharp. This means that,  for  any  $\cdkn$ space $\ms$  and  $C> h_{\ms}$, the inequality ${\mm^+(\Omega) } \geq C {\mm(\Omega)}$ does not always hold.
\end{theorem}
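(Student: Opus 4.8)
The plan is to test the putative inequality on metric balls and extract, from the monotone volume function, that its exponential growth rate forces $C\le h_{\ms}$. Fix $x_0\in X$ and set $V(r):=\mm(B_r(x_0))$; this is non-decreasing, strictly positive (full support of $\mm$) and finite (local finiteness of $\mm$ together with boundedness of balls), and by Proposition \ref{prop:entropy} the limit $h_{\ms}=\lim_{r\to\infty}\frac{\ln V(r)}{r}$ exists in $[0,+\infty]$. One may assume $h_{\ms}<+\infty$, otherwise the statement is vacuous. The argument is by contradiction: suppose there is $C>h_{\ms}$ such that $\mm^+(\Omega)\ge C\,\mm(\Omega)$ for every measurable $\Omega$ of finite measure.

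The sole geometric ingredient is the inclusion $B_r(x_0)^\epsilon\subseteq B_{r+\epsilon}(x_0)$, immediate from the triangle inequality. First I would apply the hypothesis to $\Omega=B_r(x_0)$ and combine it with this inclusion to obtain, for every $r>0$,
\[
C\,V(r)\;\le\;\mm^+\big(B_r(x_0)\big)\;=\;\liminf_{\epsilon\to0^+}\frac{\mm\big(B_r(x_0)^\epsilon\big)-V(r)}{\epsilon}\;\le\;\liminf_{\epsilon\to0^+}\frac{V(r+\epsilon)-V(r)}{\epsilon}.
\]
Next, since $V$ is monotone it is differentiable at a.e.\ $r$, and at each such point the last liminf equals $V'(r)$; hence $V'(r)\ge C\,V(r)$ for a.e.\ $r>0$. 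Then I would pass to $W:=\ln V$ (non-decreasing, with $W'=V'/V\ge C$ a.e.) and use the fundamental-theorem-of-calculus inequality valid for monotone functions,
\[
\ln V(r)-\ln V(1)\;=\;W(r)-W(1)\;\ge\;\int_1^r W'(s)\,\d s\;\ge\;C\,(r-1),\qquad r\ge 1,
\]
so that dividing by $r$ and sending $r\to\infty$ yields $h_{\ms}\ge C$, contradicting $C>h_{\ms}$. This shows the constant $h_{\ms}$ in \eqref{th1:eq1} cannot be enlarged, and, combined with Theorem \ref{th1}, that $\mu_{\ms}=h_{\ms}$.

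I do not anticipate a genuine obstacle; the steps requiring care are purely measure-theoretic. One must know that balls have finite measure (so the hypothesis applies to them), that $\mm^+(B_r(x_0))$ is bounded a.e.\ by $V'(r)$ — which is precisely Lebesgue's differentiation theory for the monotone function $V$ — and, in the passage from the a.e.\ differential inequality $V'\ge CV$ to the integrated estimate, one must invoke $W(r)-W(1)\ge\int_1^r W'$ for a merely monotone $W$ rather than a naive Fundamental Theorem of Calculus. The degenerate case $\mm(X)<+\infty$ requires no separate treatment: there $V$ is bounded, $\ln V(r)/r\to 0$, and the same contradiction (in fact $h_{\ms}=0$) appears.
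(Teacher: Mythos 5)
Your proof is correct, and it closes the argument by a somewhat different mechanism than the paper. Both proofs argue by contradiction and test the putative inequality on geodesic balls, but the paper then plays the resulting lower bound $\ln\mm(B_{r+\delta})-\ln\mm(B_r)\ge \delta C$ (stated there without detail) against an \emph{upper} bound on the same increment coming from the Brunn--Minkowski-type inequality \eqref{eq:lemma:ve}, i.e.\ it uses the ${\rm CD}(0,\infty)$ condition a second time, and obtains $C\le\frac{\delta+\epsilon}{\delta}h_{\ms}$ after letting $r\to\infty$ and then $\epsilon\to0$. You instead never invoke \eqref{eq:lemma:ve}: you turn the isoperimetric hypothesis on balls into the a.e.\ differential inequality $V'\ge CV$ for the monotone volume function $V(r)=\mm(B_r(x_0))$, integrate it via the Lebesgue inequality $W(r)-W(1)\ge\int_1^r W'$ for the non-decreasing function $W=\ln V$, and contradict the definition of $h_{\ms}$ directly. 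This buys two things: it supplies rigorously the integration step that the paper only asserts ("applying \eqref{th2:eq1} with geodesic balls we get \dots"), and it shows the stronger, curvature-free statement that in any metric measure space admitting a volume entropy the Cheeger constant is at most $h_{\ms}$ (essentially Brooks' bound cited before the theorem), curvature entering only through Proposition \ref{prop:entropy} to guarantee that $h_{\ms}$ exists. The paper's route, by contrast, keeps the whole proof inside the optimal-transport/Brunn--Minkowski framework it set up for Theorem \ref{th1}. One small caveat in your write-up: "local finiteness of $\mm$ together with boundedness of balls" does not by itself give $\mm(B_r(x_0))<\infty$ in a general metric measure space; either quote the standard volume-growth bound for ${\rm CD}(K,\infty)$ spaces, or simply observe (as your reduction already permits) that if some ball had infinite measure then $h_{\ms}=+\infty$ and the statement is vacuous, so finiteness of all ball volumes may be assumed.
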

\begin{proof}
We will prove the theorem by contradiction. Assume there is a constant $C> h_{\ms}$, such that 
 \begin{equation}\label{th2:eq1}
{\mm^+(\Omega) } \geq C {\mm(\Omega)}>0
\end{equation}
  for any bounded set $\Omega \subset X$.

By \eqref{eq:lemma:ve} we have
\begin{eqnarray*}
&&\frac {r-\epsilon}{r+\delta}\left ( \frac{\ln \big(\mm(B_{r+\delta}(x_0)) \big)-\ln \big(\mm(B_{r}(x_0)) \big)}{\delta} \right )\\
&\leq &  \frac {\delta+\epsilon}{\delta(r+\delta)} \Big( \ln \big (\mm(B_{r} (x_0))\big ) -  \ln \big (\mm(B_\epsilon (x_0))\big ) \Big).
\end{eqnarray*}
Applying \eqref{th2:eq1} with geodesic balls,  we get 
\[
{\ln \big(\mm(B_{r+\delta}(x_0)) \big)-\ln \big(\mm(B_{r}(x_0)) \big)}  \geq  \delta C,
\]
so
\[
\frac {r-\epsilon}{r+\delta} C\leq   \frac {\delta+\epsilon}{\delta(r+\delta)} \Big( \ln \big (\mm(B_{r} (x_0))\big ) -  \ln \big (\mm(B_\epsilon (x_0))\big ) \Big).
\]
Letting $r \to \infty$, we get
\[
C \leq   \frac {\delta+\epsilon}{\delta} h_{\ms}.
\]
Letting $\epsilon \to 0$ we get the contradiction.
\end{proof}

\section{Rigidity}\label{sect:rigidity}
In this section we will study the rigidity of  the  isoperimetric inequality  \eqref{th1:eq1}.  
We first deal with the  rigidity for 1-dimensional spaces. The idea behind its proof is essential,  which will be used directly or indirectly later.   Then we will study the equality case of the isoperimetric inequality in the general  $\rcd$ setting.

\subsection{Rigidity for log-concave densities}\label{subsec:1dim}
By a well-known  result of Bobkov \cite{Bobkov96},  for any log-concave density on $\R$, the infimum   in the corresponding isoperimetric problem  is attained by a  half line. Among all  log-concave densities, the log-linear densities ${\mathrm e}^{ht}$ play  a special role.

\begin{proposition}[Rigidity for log-concave densities]\label{lemma:rigidity}
Let  $\ms=\Big (\R, | \cdot |, {\rm e}^{f} \mathcal L^1 \Big) $ be a 1-dimensional metric measure space,  where ${f} $ is concave and
\[
\lmt{t}{+\infty} f'(t)= h>0.
\]
Then the volume entropy $h_{\ms}=h$. If there is $\Omega \subset \R$ such that 
\[
\mu_{\ms}=\frac{ \mm^+(\Omega) }{\mm(\Omega)}=h,
\]
then $f'= h$ and $\Omega =(-\infty, b)$  for some $b\in \R$.
\end{proposition}

\begin{proof}
Since $f$ is concave,  $f'$ is well-defined almost everywhere, and the limits $\lmt{t}{-\infty} f'(t)$, $\lmt{t}{+\infty} f'(t)$ exist. Assume $\lmt{t}{+\infty} f'(t)=h>0$.   We can see that the volume entropy of $\ms$ is the same as the volume entropy of $\Big (\R, | \cdot |, {\mathrm e}^{ht}\d t\Big)$,  which is exactly $h$. By Theorem \ref{th1} and Theorem \ref{th2} we know  the  Cheeger constant $\mu_{\ms}$ is   $h$.  

Assume there is $\Omega \subset \R$ attaining $\mu_{\ms}$,  and by Bobkov's result \cite{Bobkov96}, $\Omega$ must be  $(-\infty, \rm C)$ for some $C\in \R$. Assume by contradiction that $f'({\rm C})>h$. We replace $f$ by 
\begin{equation*}
\tilde f(t):=
\left \{\begin{array}{ll}
f(t) &t\leq {\rm C},\\
f({\rm C})+f'({\rm C})(t-{\rm C}) &t>{\rm C}.
\end{array}\right.
\end{equation*}
Similarly, we can see that the volume entropy of $\Big (\R, | \cdot |, {\mathrm e}^{\tilde f(t)}\d t\Big)$ is $f'(\rm C)$ and its corresponding Cheeger constant $\mu_{{\mathrm e}^{\tilde f(t)} }\geq f'({\rm C}) >h$. However,  by the definition of the Cheeger constant, 
\begin{equation}\label{eq1d:intro}
\mu_{{\mathrm e}^{\tilde f(t)} } \leq \frac{e^{\tilde f({\rm C})}}{\int_{-\infty}^{\rm C} {\mathrm e}^{\tilde f(t)}\,\d t}=\frac{{\mathrm e}^{ f({\rm C})}}{\int_{-\infty}^{\rm C} {\mathrm e}^{ f(t)}\,\d t }=  \mu_{\ms}=h
\end{equation}
which leads to a contradiction. Therefore $f'({\rm C})=h$ and by concavity of $f$,  $f'=h$ on right-hand. 
Notice that the inequalities in \eqref{eq1d:intro} must be equalities. So \[\int_{-\infty}^{\rm C} e^{\tilde f(t)}\,\d t=\int_{-\infty}^{\rm C} e^{f({\rm C})+h(t-{\rm C})}\,\d t\]   which proves the proposition.

\end{proof}

\subsection{Rigidity for  RCD spaces}\label{subsec:rigidity}
In the rest of this section,  we study rigidity of the isoperimetric inequality in $\rcd$ spaces. Recall that the Sobolev space $W^{1,2}\ms$ is a Hilbert space,  as a part of the definition of the RCD condition (cf. \cite{AGS-M,  AGMR-R}).   In this case, for $u, v\in  W^{1,2}\ms$,  we define 
$$
\nabla u \cdot \nabla v: = \inf_{\epsilon > 0} \frac{ |\D (v+ \epsilon u)|^{2} - |\D v|^{2} }{2\epsilon},
$$
and we have $\nabla u \cdot \nabla v= \nabla v \cdot \nabla u$.  Here  $|\D u|$ denotes the weak  upper gradient  of  $u$ satisfying
\begin{eqnarray*}
 \int |\D u|^2\,\d \mm=\inf \left\lbrace \liminf_{n \to \infty} \int \lip{u_n}^2\d\mm : u_n \in \Lip_{c}(X, \d),\ \! u_n \to u \text{ in } L^2 \right\rbrace 
\end{eqnarray*}
where 
\begin{equation*}
\lip f(x):=\limsup_{y\to x}  \frac{|f(y)-f(x)|}{\d(x, y)} \; \text{ if $x$ is not isolated}, \quad \lip f(x)=0 \; \text{ otherwise}.
\end{equation*}

\begin{definition}[Measure valued Laplacian, cf. \cite{G-O}]\label{D:Laplace}
Let $\Omega\subset X$ be an open subset and let $u \in W^{1,2}_{loc}\ms $. We say that $u$ is in the domain of the Laplacian, and write $u \in \D({\bf \Delta},\Omega)$, provided  there exists a signed measure $\mu$ on $\Omega$ 
such that for any $f \in \Lip_{c}(\Omega)$ it holds  that
\begin{equation}\label{eq:defTfLap}
 \int \nabla f \cdot \nabla u \,\d\mm = - \int f\,\d \mu.
\end{equation}
If  $\mu$ is unique,  we denote it by ${\bf \Delta} u$. If ${\bf \Delta} u \ll \mm$,   we write $u\in {\rm D}({\Delta}, \Omega)$ and denote its density by $\Delta u$.
\end{definition}

\medskip

In the next theorem we will prove the rigidity of the isoperimetric inequality. The strategy for proving the theorem is as follows.  Supposing there is  $\Omega \subset X$ with positive measure attaining the equality, we first show the existence of a curve $(\mu_\sigma)_{\sigma>0}$ in the Wasserstein space, such that $\supp \mu_\sigma \subset \Omega^\sigma$ and $\sigma \mapsto \ent{\mm}(\mu_\sigma)$ is linear. Then by showing that the Kantorovich potential $\phi_\sigma$ associated with $\mu_0$ and $\mu_\sigma$ is an affine function,  we  prove  $X$   splits off a one-dimensional space.
This can be seen as a dimension-free version of De Philippis--Gigli's theorem \cite{DPG-F} in the ${\rm RCD}(0, N)$ setting.   At last,  by showing the ``normal vector" of $\Omega$ is parallel to $\nabla \phi_\sigma$ in the non-smooth sense,  we prove that $\Omega$ is a half space.

\begin{theorem}[Rigidity theorem]\label{th:rigid}
Let $\ms$ be an ${\rm RCD}(0, \infty)$  metric measure space  with positive volume entropy $h_{\ms}$.

 If there is a measurable set $\Omega \subset X$ with positive measure such that 
\begin{equation}\label{th2:ass}
{\mm^+(\Omega) } =h_{\ms}  {\mm(\Omega)},
\end{equation}
then 
 \[\ms \cong \Big (\R, | \cdot |, e^{h_{\ms} t} \d t \Big) \times (Y, \d_Y, \mm_Y)\] for some  ${\rm RCD}(0, \infty)$ space $(Y, \d_Y, \mm_Y) $ with $\mm_Y(Y)<+\infty$, where the product space on the right-hand side is a metric measure space  with the canonical $L^2$-product metric and the product measure.
In  a suitable choice of coordinates,  up to a negligible set, $\Omega$ can be identified  as
\[
\Omega={(-\infty, c) \times Y} \subset  \R \times Y
\]
with  ${\mm_Y(Y)}\int_{-\infty}^c e^{h_{\ms} t} \,\d t={\mm(\Omega)}$.
\end{theorem}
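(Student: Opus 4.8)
The plan is to realize the minimizer $\Omega$ as a half-space inside a split needle via a limiting argument over balls $B_R(x_0)$ with $R\to\infty$. First I would fix a base point $x_0\in X$, and for each large radius $R$ consider the $L^1$-optimal transport from the uniform distribution on $\Omega$ to the uniform distribution on $B_R(x_0)\setminus\Omega$; the balancing function $g_R$ is a bounded, integrable, $\mm$-balanced function satisfying the moment condition, so Theorem \ref{thm:localization} (together with Assumption \ref{assumption}) provides a needle decomposition $\{X_{q,R}\}_{q\in Q_R}$ of a transport set $\mathsf T_R$, with quotient measures $\mm_{q,R}$ each log-concave and one-dimensional, and with $g_R\equiv0$ off $\mathsf T_R$. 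On each needle the set $\Omega\cap X_{q,R}$ is (by the structure of $L^1$-localization and Bobkov's result) an initial segment of the geodesic, and the one-dimensional co-area/localization identity lets one write $\mm^+(\Omega)\ge\int_{Q_R}\mm_{q,R}^+(\Omega\cap X_{q,R})\,\d\qq_R(q)$. Then I would apply Lemma \ref{lemma:1dim} on each needle: since the global equality forces near-equality needle-by-needle (the averages of the one-dimensional perimeters cannot exceed $h_{\ms}\mm(\Omega)$ while each is bounded below by $h_{\ms}$ times the needle mass of $\Omega$ up to a lower-order term controlled by $1/D$), the $O(1/D)$ defects must vanish in the limit.

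The second stage is the quantitative rigidity step. I would restrict attention to the needles $X_{q,R}$ passing through a fixed smaller ball $B_{R_0/2}(x_0)$ — these carry almost all of $\mm^+(\Omega)$ once $R_0$ is large — and invoke Lemma \ref{lemma:linear} with $\eps=\eps(R_0,R)\to0$: this forces $-V_{q,R}'$ to be within $O(\sqrt\eps)$ of $h_{\ms}$ on a long interval $[\,\min\{b,L\}-l(\eps),\,D/10\,]$ whose left endpoint is pushed to $-\infty$ as $\eps\downarrow0$, and it forces $\min\{b,L\}\ge l(\eps)\to\infty$, i.e. the needles get long and the endpoint of $\Omega$ on each recedes. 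Passing to the limit $R_0,R\to\infty$ along a suitable diagonal sequence (using a compactness/stability argument for the decompositions, or more concretely extracting limits of the one-dimensional densities), each surviving needle converges to an isometric copy of $\bigl(\R,|\cdot|,e^{h_{\ms}t}\,\d t\bigr)$, and the affine potentials $u_R$ (the Kantorovich potentials, affine with slope $1$ on each needle) converge to a globally defined $1$-Lipschitz function $\phi$ on $X$ with $|\D\phi|=1$ $\mm$-a.e. and with $\mm$ disintegrating into these log-linear needles. Here I would cite Proposition \ref{lemma:rigidity}/Lemma \ref{lemma:linear} for the one-dimensional rigidity and the $\rcd$ structure (Proposition \ref{def-bevar}, the Bochner inequality) to upgrade "$\phi$ affine on each needle" to "$\Hess_\phi=0$".

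The third stage is the splitting itself. Once $\phi\in W^{1,2}_{\loc}$ with $|\D\phi|\equiv1$ and $\Hess_\phi\equiv0$ (equivalently $\phi$ is a nonconstant function whose gradient flow is by isometries), the $\rcd$ splitting theorem — in the form already available for $\rcd$ spaces admitting a linear/affine function — yields $\ms\cong(\R,|\cdot|,e^{h_{\ms}t}\,\d t)\times(Y,\d_Y,\mm_Y)$ with $(Y,\d_Y,\mm_Y)$ an $\rcd$ space; finiteness $\mm_Y(Y)<\infty$ follows because $\mm(\Omega)<\infty$ and $\Omega$ contains (in the product coordinates, for $\phi$-values up to some level) a set of the form $(-\infty,e]\times Y$, whose mass is $\mm_Y(Y)\int_{-\infty}^e e^{h_{\ms}t}\,\d t$. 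That $\Omega$ is \emph{exactly} this half-space, and that $e$ is pinned by $\int_{-\infty}^e e^{h_{\ms}t}\,\d t=\mm(\Omega)/\mm_Y(Y)$, comes from the needle-wise statement (each $\Omega\cap X_q$ is an initial half-line and the one-dimensional equality case of Proposition \ref{lemma:rigidity} forces its endpoint to be the same level of $\phi$ on $\qq$-a.e.\ needle) plus Fubini on the product.

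I expect the \textbf{main obstacle} to be the limiting procedure in the second stage: making rigorous the passage from the family of needle decompositions $\{X_{q,R}\}$ (defined on transport sets $\mathsf T_R$ that depend on $R$ and need not be nested) to a single global decomposition, and the convergence of the potentials $u_R$ to a genuine affine function $\phi$ on all of $X$. One must control the portion of $\mm^+(\Omega)$ lost to short needles and to the complement of $\mathsf T_R$ uniformly, extract the limit of the one-dimensional densities in a topology strong enough to preserve the "$e^{h_{\ms}t}$" profile (for which Lemma \ref{lemma:linear}'s two-sided bound $(1-4\eps)h_{\ms}\le -V'\le(1+3\sqrt\eps)h_{\ms}$ on an expanding interval is exactly the right tool), and then verify that the limit object still disintegrates $\mm$; the finite-dimensional-approximation/compactness bookkeeping is the delicate part, whereas the one-dimensional analysis and the final splitting step are comparatively standard.
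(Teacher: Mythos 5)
Your outline follows the paper's skeleton (localization by $L^1$-transport between $\Omega$ and $B_R\setminus\Omega$, good/bad needles through $B_{R_0/2}$, the quantitative Lemma \ref{lemma:linear}, a limit function $\phi$, then a splitting), but the decisive second stage is left as ``compactness/stability of the decompositions,'' and that is exactly where the proposal has no working mechanism: the transport sets $\mathsf T_R$ and the needles $X_{q,R}$ are not nested in $R$, and there is no compactness theorem for such disintegrations that would let ``each surviving needle converge to $(\R,|\cdot|,e^{h_{\ms}t}\,\d t)$'' and produce a limit disintegration of $\mm$. The paper deliberately avoids passing the needles to the limit. It passes only the Kantorovich potentials $\phi_R$ to a locally uniform limit $\phi$, and then identifies $\phi$ analytically: testing against $\varphi\in\Lip_c(\Omega\cap B_{R_0/2})$, writing $\int\Gamma(\phi_R,\varphi)\dm$ as an integral of directional derivatives along the finite-$R$ needles, using Lemma \ref{lemma:linear} on the good needles and the mass bound \eqref{step1:eq0.4} on the bad ones to get $\mathbf\Delta\phi=h_{\ms}\mm$ on $\Omega$; then Lemma \ref{lemma:nbhd} (the equality \eqref{th2:ass} propagates to every $\Omega^\sigma$) is what allows this identity, and $|\D\phi|=1$ (Lemma \ref{lemma:conv}, itself a nontrivial test-plan argument, not a free consequence of the limit), to be extended to all of $X$ --- a point your outline does not address, since all your information lives a priori on the needles through $\Omega$. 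With $|\D\phi|=1$ and $\Delta\phi\equiv h_{\ms}$, the Bochner inequality gives $\H_\phi=0$, and the log-linear needles of the final decomposition are produced \emph{afterwards} by the regular Lagrangian flow of $\phi$, not recovered as limits of the $X_{q,R}$. (Also, your side claim that $\Omega\cap X_{q,R}$ is an initial segment of each needle ``by Bobkov'' is unjustified and unused in the actual argument; Bobkov concerns minimizers, while $\Omega\cap X_{q,R}$ is an arbitrary trace.)

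The final step contains a genuine error: you assert that the one-dimensional equality case of Proposition \ref{lemma:rigidity} forces the endpoints $e(y)$ to lie on the same level of $\phi$. It does not. On each factor needle $(\R,|\cdot|,e^{h_{\ms}t}\,\d t)$ \emph{every} half-line $(-\infty,c]$ attains the Cheeger constant $h_{\ms}$, so needle-wise equality places no constraint on $e(y)$, and ``Fubini on the product'' cannot pin it down. Constancy of $e(\cdot)$ must be extracted from the global equality $\mm^+(\Omega)=h_{\ms}\mm(\Omega)$: in the paper this is the whole of Step 7, where one approximates $\chi_\Omega$ by Lipschitz functions $f_n$ realizing the perimeter, uses the tensorization $|\D f_n|^2=|\D f^r_n|^2+|\D f^y_n|^2$ of the product to show that the horizontal contribution $\int|\D f^r_n|\dm$ must vanish in the limit, deduces that $e^{h_{\ms}e(\cdot)}$ has zero weak gradient on $Y$, and concludes that $e$ is constant via the Sobolev-to-Lipschitz property. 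Without an argument of this kind the claimed representation $\Omega=(-\infty,e]\times Y$ remains unproved. A smaller omission of the same flavour: the paper first reduces, via heat-flow regularization, Cavalieri's formula and the coarea formula, to the case where $\Omega$ is a connected open set, which is needed to run the transport and neighbourhood arguments; your proposal works with a bare measurable $\Omega$ throughout.
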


\begin{proof}
The proof is divided  into five steps.

{\bf Step 1.}  We can assume that $\Omega$ is open:

By Bakry\textendash \'Emery's gradient estimate for the heat flow $f_t:=H_t (\chi_\Omega), t>0$, we can see (cf. \cite[Remark 3.5]{GH-I})
\[
\int |\D f_t|\,\d \mm\leq {\rm Per}(\Omega)\leq \mm^+(\Omega)=h_{\ms} \int f_t\,\d \mm.
\]
By Cavalieri's formula (cf. \cite[Chapter 6]{AT-T}) and the inequality \eqref{eq:per} in Theorem \ref{th1} 
\[
h_{\ms} \int  f_t\,\d \mm=h_{\ms} \int_0^1 \mm\big(\{f_t> s\}\big)\,\d s\leq  \int_0^1 {\rm Per}\big(\{f_t> s\}\big)\,\d s.
\]
By the coarea formula of Fleming\textendash Rishel (see \cite{MirandaBV} and \cite[\S 4]{ADMG-P})
\[
\int |\D f_t|\,\d \mm = \int_0^1 {\rm Per}\big(\{f_t> s\}\big)\,\d s.
\]

Combining the inequalities above, for $\mathcal L^1$-a.e. $s\in [0,1]$, we have
\[
h_{\ms}\mm\big(\{f_t>s\}\big)= \mm^+\big(\{f_t>s\}\big)
\]
By regularization of the heat flow, $f_t$ is Lipschitz (cf. \cite[THEOREM 6.5]{AGS-M}),  so $\{f_t>s\}$ is a non-trivial open set  for some $s$.
Since the isoperimetric  profile is linear, without loss of generality,  we may assume that   $\Omega$ is a connected open set.

\medskip


\medskip

{\bf Step 2.} The potential function $\phi_\sigma$ and the optimal transport map  $T_{\phi_\sigma}$:

For $r>0$, let $\mu_{r, 0}:=\frac 1{\mm(\Omega\cap B_r)} \mm \restr{\Omega\cap B_r}$ and $\mu_{r,1}:=\frac 1{\mm\big((\Omega\cap B_r)^R\big)} \mm \restr{(\Omega\cap B_r)^R}$. Consider the  Wasserstein geodesic $(\mu_{r,t})_{t\in[0,1]}$ from $\mu_{r,0}$ to $\mu_{r,1}$.  For $\sigma>0$, by the super-position theorem  (cf. \cite[Theorem 2.10]{AG-U}  and Remark \ref{spp}),   we can see that $\mu_{r, \sigma/R}$ is concentrated on $ \Omega^{\sigma(r+R)/R}$.

By the ${\rm CD}(0, \infty)$ condition we have
\begin{equation}\label{3.22}
\ent \mm (\mu_{r, \sigma/R})\leq  -\left(1-\frac \sigma R\right)\ln \big(\mm(\Omega\cap B_r) \big) -\frac \sigma R\ln \big(\mm((\Omega\cap B_r)^{R}) \big).
\end{equation}

By \eqref{3.22} we know the family of measures $\{\mu_{r, \sigma/R}\}_{r, R>0}$ is tight (cf. \cite[Lemma 4.4]{AGMR-R}),   so it converges narrowly,   up to taking a subsequence,   to a measure $\mu_\sigma$ as $R\to \infty$ and $r\to \infty$.  By the positive volume entropy condition and  the super-position theorem again,     we can see that  $(\mu_\sigma)_{\sigma>0}$ is a Wasserstein geodesic ray and $\mu_\sigma$ is concentrated on $\overline{\Omega^\sigma}$.  From the construction,  we can see that  $\mu_{0}=\frac 1{\mm(\Omega)} \mm \restr{\Omega}$ and we may assume that  $\mm(\partial \Omega^\sigma)=0$ for all $\sigma$ without loss of generality.  By Jensen's inequality, we can also see
\begin{equation}\label{3.20}
\ent \mm (\mu_{\sigma})\geq -\ln \big(\mm(\Omega^{\sigma})\big).
\end{equation}

Letting $R\to \infty, r\to \infty$ in \eqref{3.22},  and  combining  \eqref{3.20} and the lower semi-continuity of the entropy,  we get
\begin{equation}\label{eq:step2}
-\ln \big(\mm(\Omega^{\sigma})\big)\leq\ent \mm (\mu_{\sigma})\leq \underbrace{-\ln \big(\mm(\Omega) \big) }_{=\ent \mm (\mu_{0})}-\sigma h_{\ms}.
\end{equation}
By \eqref{eq:step2}  and the assumption \eqref{th2:ass}, we can see that 
\[
\lmti{\sigma}{0}\frac{\ent \mm (\mu_{\sigma})-\ent \mm (\mu_{0})}{\sigma}\geq \lmti{\sigma}{0}\frac{-\ln \big(\mm(\Omega^{\sigma})\big)+\ln \big(\mm(\Omega)\big)}{\sigma}=-\frac{\mm^+(\Omega) }{\mm(\Omega)} =-h_{\ms}.
\]
By the ${\rm CD}(0, \infty)$ condition,  $\sigma \mapsto \ent \mm (\mu_{\sigma})$ is convex.   Combining with \eqref{eq:step2} we get
\[
-h_{\ms}\leq \frac{\d^+}{\d \sigma} \restr{\sigma=0}\ent \mm (\mu_{\sigma})\leq \frac {\ent \mm (\mu_{\sigma})-\ent \mm (\mu_{0})}{\sigma}\leq -h_{\ms}.
\]
Thus $\sigma \mapsto \ent \mm (\mu_{\sigma})$ is linear and $\ent \mm (\mu_{\sigma})=-\ln \big(\mm(\Omega) \big) - h_{\ms} \sigma$.

By \cite{RS-N},   there exist a Kantorovich potential $\phi_\sigma$, and a map $T_{\phi_\sigma}: \Omega \to \Omega^{\sigma}$,   so that $\mu_\sigma=(T_{\phi_\sigma})_\sharp \mu_0$.
 By construction of $\mu_\sigma$,  we have $W_2(\mu_0, \mu_\sigma)=\sigma$ and $\d( T_{\phi_\sigma}(x), x)=\sigma$ for $\mm$-a.e. $x\in \Omega$. Thus by the metric Brenier's theorem \cite[PROPOSITION 3.5]{AGS-M} we have $|\D \phi_\sigma|=\sigma$. Furthermore,  by  an approximation argument with simple functions, we can also prove that 
${\rm Ent}_\mm((T_{\phi_\sigma})_\sharp\nu)-{\rm Ent}_\mm(\nu)=-  h_{\ms}  \sigma$ for any $\nu$ which is concentrated on $\Omega$.

\medskip

{\bf Step 3.}  $\phi_\sigma \in {\rm D}({\bf \Delta}, \Omega)$ and ${\Delta \phi_\sigma}=\sigma h_{\ms}$.

Let $\rho$ be a Lipschitz probability density with compact support in $\Omega$.  For any $\epsilon>0$,  set $\rho_\epsilon:=c_\epsilon (\rho+\epsilon)\chi_{\Omega^\sigma}$ where $c_\epsilon$ is the normalizing constant. Let $\tau=(T_{\phi_\sigma})_\sharp (\rho \,\mm)$ and $\tau_\epsilon=(T_{\phi_\sigma})_\sharp (\rho_\epsilon \,\mm)$.  By the derivative of the entropy formula \cite[THEOREM 4.8-(b)]{AGS-M},  we have
\[
{\rm Ent}_\mm(\tau_\epsilon)-{\rm Ent}_\mm(\rho_\epsilon\,\mm) \geq -\int_{\Omega} \nabla \phi_\sigma\cdot \nabla \rho_\epsilon\,\d \mm.
\]
Letting $\epsilon \downarrow 0$,  by the monotone convergence theorem and the locality of the weak upper gradient,  we get
\[
{\rm Ent}_\mm(\tau)-{\rm Ent}_\mm(\rho\,\mm) \geq -\int_{\Omega} \nabla \phi_\sigma\cdot \nabla \rho\,\d \mm.
\]
Combining Step 2,  we get
\[
-h_{\ms} \sigma\geq -\int_{\Omega} \nabla \phi_\sigma\cdot \nabla\rho\,\d \mm.
\]
By Step 2 we know   almost all points in $\Omega$ have a pre-image of $T_{\phi_\sigma}$.  So $\{r \in \R: \mm(\Omega^{r})>0\}=\R$ where  $\Omega^{r}:=\{x\in \Omega: \d(x, \Omega^c)>|r|\}$. Considering the optimal transport induced by $- \phi_\sigma$ (cf. \cite[Proposition 5.3]{GH-C}), we can  also prove
\[
h_{\ms} \sigma\geq \int_{\Omega} \nabla \phi_\sigma\cdot  \nabla \rho\,\d \mm.
\]
Then by the Riesz\textendash Markov\textendash Kakutani representation theorem we know $\phi_\sigma \in {\rm D}({\bf \Delta}, \Omega)$ and
\[
\Delta \phi_\sigma=-h_{\ms}\sigma~~\text{on}~~\Omega.
\]

\medskip

{\bf Step 4.} The gradient flow of $\phi_\sigma$ induces an isometric  splitting.

The existence of the isometric splitting map has been well-studied  by Gigli and his co-authors  in   \cite{G-S,  GKKO-R} in  the framework of non-smooth metric measure spaces.  For convenience, we will omit some details here.

Let $\phi=\phi_\sigma /\sigma$.  From Step 2 and Step 3,   we know that $|\nabla \phi|=1$ and
$\int  |\nabla \phi|^2 \Delta \varphi\,\d \mm=\int  \Delta \varphi\,\d \mm=0$ for any $\varphi \in \Lip_c(\Omega) \cap {\rm D}(\Delta, \Omega)$.  By Step 3, $\Delta \phi=-h_{\ms}$,  so $\nabla \phi \cdot \nabla \Delta \phi=0$,  by Bochner's formula \cite[Theorem 3.3.8]{G-N} we know  $\phi$ is an affine function (in the sense of \cite[Proposition 3.2]{GKKO-R}, $\D^{\text{sym}} (\nabla \phi)=0$ and $|\D \phi|$ is constant). By   \cite[Theorem 4.4]{GKKO-R},  there is a map $F:(-\infty, 0) \times \Omega \to \Omega$,  called the {Regular Lagrangian Flow},  and studied by Ambrosio--Trevisan  \cite{AT-W} in the metric measure setting,    such that 
\begin{itemize}
\item [(i)] $F_t(\cdot):=F(t, \cdot)$ is an isometry on $X$ for each $t\in (-\infty, 0)$;
\item [(ii)] $(F(t, x))_{t\in (-\infty, 0)}$ is a geodesic (ray)  in $X$ for every $x\in \Omega$.
\end{itemize}

 By  disintegration, $\mm\llcorner_\Omega$ has a decomposition 
\begin{equation}\label{eq1:step5}
\mm\llcorner_\Omega=\int_{Y} \mm_{y}\,\d \qq (y),~~~\mm_y\in  {\rm Meas}(X_y),~~~X_y=\big\{F_t(y): t\in (-\infty,0)\big\}.
\end{equation}
Following  Cavalletti\textendash Mondino  \cite[4b]{CM-Laplacian},  with the help of \eqref{eq1:step5}, we can represent the  measure-valued Laplacian  in the following way
\[
{\bf \Delta} \phi=\int_{Y} h_{\ms}  \phi \,\d \mm_y\d \qq (y)
\]
By integration by parts  on $\R$ (cf. \cite[Theorem 4.8]{CM-Laplacian}), this implies that $\mm_y=e^{V_y}\,\d t$ with $ V'_{y}=h_{\ms}$ on $X_y$.  Furthermore, using the same argument as in Step 2, we can prove that 
\[
\ln (\mm(\Omega))\geq \ln (\mm(\Omega^{\sigma}))-h_{\ms} \sigma.
\]
Combining this inequality with  \eqref{eq:step2} we get $\mm^+(\Omega^\sigma)=h_{\ms}\mm(\Omega^\sigma)$. Since $\sigma$ is arbitrary, we can see that  $F$ can be defined on the whole product space $\R \times X$. Similar to  \eqref{eq1:step5} we can write $ \mm=\int_{Y} \mm_{y}\,\d \qq (y)$  with $\mm_y=e^{V_y}\,\d t$ and $ V'_{y}=h_{\ms}$.

 Following  \cite[Section 6]{G-S} and \cite[Section 5]{GKKO-R}, we can prove that  $F$ induces  an isometry between  $(X, \d)$ and  the product  space $  (Y, \d_Y) \times \big (\R, | \cdot |\big)$   equipped with  the $L^2$-product distance, where $Y$ can be identified as $\phi^{-1}(0)$.
Precisely, there are isometries $\Phi, \Psi$ defined by
\[
\Phi: X \ni x \mapsto (y, t)\in Y \times \R~~\text{s.t.}~F_t(y)=x
\]
and
\[
\Psi: Y \times \R \ni (y, t) \mapsto x= F_t(y) \in X.
\]
Furthermore, note that
 \[
 (F_t)_\sharp \mm=e^{ h_{\ms} t}\mm~~~~~\forall t\in \R.
 \]
We can define
 \[
 \mm_Y(A):=\lmt{\epsilon}{0} \frac{\mm\big(\Psi(A \times [0, \epsilon])\big )}{\epsilon}=\qq(A),~~~~\forall A\subset Y~~\text{is measurable},
 \]
so that
 \[
\Phi_\sharp \mm= \mm_Y \times e^{ h_{\ms} t} \d t.
 \] Using the same argument as in \cite[Section 6]{G-S} and \cite[Section 5]{GKKO-R}, we can prove that $(Y, \d_Y, \mm_Y) $ is $\rcd$ and 
$$\ms \mathop{ \cong}_{\Phi, \Psi}  (Y, \d_Y, \mm_Y) \times \Big (\R, | \cdot |, e^{h_{\ms} t}\,\d t\Big). $$

\medskip

{\bf Step 5.} Characterization of $\Omega$.

By  the decomposition  \eqref{eq1:step5} and Theorem \ref{th1},  it holds that
\[
\mm^+(\Omega) \overset{\rm Fatou} \geq  \int_{Y} \mm_{y}^+(\Omega)  \, \d \qq(y) \geq h_{\ms} \int_{Y} \mm_{y}(\Omega)  \, \d \qq(y)= h_{\ms} \mm(\Omega).
\]
Thus  
\[
\mm^+_{y}(\Omega)  =h_{\ms} \mm_{y}(\Omega) ~~~\qq\text{-a.e.}~~y\in Y.
\]
By 1-dimensional rigidity  in Proposition \ref{lemma:rigidity},  for almost every $y\in Y$, $\Omega \cap X_{y}$ is a half line, and we denote it by $(-\infty, \mathsf{b}(y)]$. So  we can identify $\Omega$ as
\[
\Omega\cong \Big \{(y, r) :  r\in \big (-\infty, \mathsf{b}(y)\big ),  ~ y\in Y,~ \mathsf{b}(y)\in \R \Big \}
\]
and  $\partial \Omega$ is the graph of a measurable  function $\mathsf{b}(\cdot)$ on $Y$.

\paragraph{Claim:} $\mathsf{b}$ is  a constant function. 
In the smooth case, the optimality of $\Omega$ surely implies  that the line $t \mapsto F_t(x)$ is ``vertical" to the boundary of $\Omega$.
If  $\mathsf{b}$ is  not constant,  the ``normal vector" of $\partial \Omega$ has a non-trivial ``horizontal component" which leads to a contradiction. See Remark \ref{rk} for more explanations and related references.

From the proof of Theorem \ref{th1}, we know there is a sequence of Lipschitz functions $(f_n)_{n\in \N}$ such that $f_n \to \chi_\Omega$ in $L^1$ and $${\rm Per}(\Omega)=\mm^+(\Omega)=\lmt{n}{+\infty} \int |\D f_n|\,\d \mm.$$ 
For simplicity, we write $f_n=f_n(y, r)$ as a function on $Y \times \R$, and $\mm=\mm_Y \times \mm_\R$ where $ \mm_\R= e^{ h_{\ms} t} \d t$. Set $f^r_n=f_n(\cdot, r)$, $f^y_n=f_n(y, \cdot)$ and $\chi^y_\Omega=\chi_{\Omega^y}=\chi_{\Omega\cap \{(y, r):r\in \R\}}$. By Fubini's theorem, $f_n \to \chi_\Omega$ in $L^1$ implies that
\[
\int_Y \left(\int_\R |f^y_n(t)-\chi^y_\Omega| \,\d \mm_\R\right) \d\mm_Y \to 0~~~\text{as}~n\to \infty.
\]
So there is a subsequence of $(f_n)$, still denoted by $(f_n)$, such that 
\[
\lmt{n}{\infty}\int_\R |f^y_n(t)-\chi^y_\Omega| \,\d \mm_\R=0,~~~\mm_Y\text{-a.e.}~y\in Y,
\]
and
\begin{equation}\label{eq0:step7}
\lmt{n}{\infty}\int_\R  f^y_n(t) \,\d \mm_\R=\int_\R \chi^y_\Omega \,\d \mm_\R=\frac 1{h_{\ms}} e^{ h_{\ms}\mathsf{b}(y)},~~~\mm_Y\text{-a.e.}~y\in Y.
\end{equation}
So by lower semi-continuity,
\begin{equation}\label{eq1:step7}
\lmti{n}{\infty} \int_\R |\D f^y_n(t)| \,\d \mm_\R \geq \mm_\R^+(\Omega^y)=e^{h_{\ms} \mathsf{b}(y)}.
\end{equation}

 By \cite[Theorem 5.2]{AGS-B}, $|\D f_n|^2=|\D f^r_n|^2+|\D f^y_n|^2$,  where $|\D f^r_n|=|\D f^r_n|_Y$ is the weak gradient of $f^r_n$ in $Y$, and $|\D f^y_n|=|\D f^y_n|_\R$ is the weak gradient of $f^y_n$ in $\R$ which is the norm of partial derivatives. So for any $\epsilon>0$ we have
\begin{eqnarray*}
&& \int |\D f_n|\,\d \mm = \int  \sqrt{|\D f^r_n|^2+|\D f^y_n|^2}\,\d \mm_\R \d\mm_Y \\
&=&  \int_{\{|\D f^r_n|>\epsilon |\D f^y_n|\}}  \left(\frac{|\D f^r_n|^2}{\sqrt{|\D f^r_n|^2+|\D f^y_n|^2}+|\D f^y_n|}+|\D f^y_n|\right)\,\d \mm_\R \d\mm_Y \\ 
&&+ \int_{|\D f^r_n|\leq \epsilon |\D f^y_n|}  \sqrt{|\D f^r_n|^2+|\D f^y_n|^2}\,\d \mm_\R \d\mm_Y\\
&\geq &\int_{\{|\D f^r_n|>\epsilon |\D f^y_n|\}}  \left(\frac{|\D f^r_n|}{2\sqrt{1+\epsilon^{-2}}}+|\D f^y_n|\right)\,\d \mm_\R \d\mm_Y
+ \int_{|\D f^r_n|\leq \epsilon |\D f^y_n|}  |\D f^y_n|\,\d \mm_\R \d\mm_Y.
\end{eqnarray*}
Then
\begin{eqnarray*}
 \int |\D f_n|\,\d \mm 
&\geq &\int_{\{|\D f^r_n|> \epsilon |\D f^y_n|\}}  \frac{|\D f^r_n|}{2\sqrt{1+\epsilon^{-2}}}\,\d \mm
+ \int |\D f^y_n|\,\d \mm\\
&\geq &\int  \frac{|\D f^r_n|}{2\sqrt{1+\epsilon^{-2}}}\,\d \mm
+\left (1-\frac \epsilon {2\sqrt{1+\epsilon^{-2}}}\right ) \int |\D f^y_n|\,\d \mm.
\end{eqnarray*}
Letting $n\to \infty$ and combining the inequalities above with \eqref{eq1:step7}, we get
\begin{eqnarray*}
&&\mm^+(\Omega)= \lmt{n}{\infty} \int |\D f_n|\,\d \mm \\
&\geq &\lmti{n}{\infty}  \int  \frac{|\D f^r_n|}{2\sqrt{1+\epsilon^{-2}}}\,\d \mm
+(1-\frac \epsilon {2\sqrt{1+\epsilon^{-2}}}) \int \mm_\R^+(\Omega^y)\, \d \mm_Y\\
&\geq &  \lmti{n}{\infty}  \int  \frac{|\D f^r_n|}{2\sqrt{1+\epsilon^{-2}}}\,\d \mm
+(1-\frac \epsilon{2\sqrt{1+\epsilon^{-2}}}) h_{\ms} \int \mm_\R(\Omega^y)\, \d \mm_Y\\
&=&   \lmti{n}{\infty}  \int  \frac{|\D f^r_n|}{2\sqrt{1+\epsilon^{-2}}}\,\d \mm
+(1-\frac \epsilon{2\sqrt{1+\epsilon^{-2}}}) h_{\ms} \mm(\Omega).
\end{eqnarray*}
Combining this with $\mm^+(\Omega)=h_{\ms} \mm(\Omega)$ we get
\[
\epsilon \mm^+(\Omega)  \geq  \lmti{n}{\infty}  \int  {|\D f^r_n|}\,\d \mm.
\]
Letting $\epsilon \to 0$ we obtain
\begin{equation}\label{eq2:step7}
\lmti{n}{\infty}  \int  {|\D f^r_n|}\,\d \mm=0.
\end{equation}

Define   $g_n(y)=\int_\R  f_n(y, r) \,\d \mm_\R(r)$.
We can approximate $g_n$  in $L^1(\mm)$ with  functions of the form  $\sum_{k\in I, |I|<\infty}  c_k f_n^{r_k}(y)$,  and   approximate  $ \int_\R  |\D f^r_n| \,\d \mm_\R(r)$  with  functions of the form  $\sum_{k\in I, |I|<\infty}  c_k |\D f_n^{r_k}|(y)$.  Then by a diagonal argument we can approximate $g_n$  in $L^1(\mm)$ with Lipschitz functions of the form  $\sum_{k\in I, |I|<\infty}  c_k h_k(y)$,  and  approximate  $ \int_\R  |\D f^r_n| \,\d \mm_\R(r)$ with $\sum_{k\in I, |I|<\infty}  c_k |\D h_k|$.  Combining this approximation with the lower semi-continuity, 
one can prove  $$|\D g_n| \leq  \int_\R  |\D f^r_n| \,\d \mm_\R(r).$$ 
Thus \eqref{eq2:step7} implies
\[
\lmti{n}{\infty}  \int |\D g_n|\,\d \mm_Y \leq  \lmti{n}{\infty}  \int  |\D f^r_n|\,\d \mm = 0.
\]
By  \eqref{eq0:step7}  and the lower semi-continuity again, we know $\int |\D  e^{ h_{\ms}\mathsf{b}(y)}|\,\d \mm_Y(y)=0$ and $\mathsf{b}(\cdot)$ is constant.

\end{proof}

\bigskip
\brk\label{rk}
From the proof of Theorem  \ref{th1}, we can see that $\chi_\Omega$ is a BV function. By a general  integration-by-parts formula of Brena--Gigli \cite[Theorem 4.13]{BrenaGigli24}, we have
\[
\int_\Omega {\rm div} v\,\d \mm=-\int v\cdot v_\Omega\,\d \mu 
\]
for any ``sufficiently smooth" vector field  $v$,   where $\mu$ is the total variation of $\chi_\Omega$ and $v_\Omega$ plays the role of the ``outward normal vector" of $\Omega$.

Suppose that $\Omega$ attains the equality in the sharp isoperimetric inequality. If
 $X$ is a Riemannian manifold equipped with a product metric structure induced by $\nabla \phi$,  from the proof of  Theorem \ref{th:rigid} and  integration by parts, we can see that the  normal vector  $v_\Omega=\nabla \phi$, so $\Omega$ is surely a half space (i.e., the function $e$ in the proof of previous theorem is constant).  In the setting of $\rcdkn$ spaces, Antonelli--Brena--Pasqualetto \cite[Theorem 3]{ABP24} identified the sub-graph of  BV functions  on  $Y$ and  sets of finite perimeter on the cylinder $Y\times \R$.
Using their result,  one can also see that $\Omega$ is a half space and the proof can be {\bf significantly shortened}.  Unfortunately, it is still unknown whether \cite[Theorem 3]{ABP24} is  valid for general $\rcd$ spaces or not.

\erk
\def\cprime{$'$}

\end{document}